 \newtheorem{theorem}{\sc Theorem}[section]
 \newtheorem{lemma}[theorem]{\sc Lemma}
\newtheorem{definition}[theorem]{\sc Definition}
\def\S{\mathbb{S}}
\def\F{\mathbb{F}}
\def\V{\bigotimes_{i\in I} V_i}
\def\PG{\mathrm{PG}}
\def\GL{\mathrm{GL}}
\def\dim{\mathrm{dim}}
\def\rk{\mathrm{rk}}
\def\sh{\mathrm{Sh}}
\def\L{\mathcal{L}}
\def\id{\mathrm{id}}
\begin{document}

\title{Orbits of the stabiliser group of the Segre variety product of three projective lines.}
\author{Michel Lavrauw and John Sheekey}
\date{}
\maketitle
\begin{abstract}
We prove that the stabiliser group $G_X$ of the Segre variety product in $\PG(V)$ of three projective lines over a field $\F$ has four orbits on {\it singular} points of $\PG(V)$, and that $G_X$ has five orbits on points of $\PG(V)$ if $\F$ is {\it finite}.

\end{abstract}
\section{Introduction}
In this paper we consider the action of the stabiliser of the Segre variety product of three projective lines, and
study the number of orbits of projective points.
The Segre variety is a projective algebraic variety named after the italian mathematician Corrado Segre, and they
are considered to be classical objects in algebraic geometry. Over finite fields, most of the results are concerned with the Segre variety product of two projective spaces.
Here we address the Segre variety $\S_{2,2,2}(\F)$ product of three projective lines.
In \cite{HaOdSa2012} the authors study invariant notions of $\S_{2,2,2}(\F_2)$, and they show that over the field $\F_2$ of order two there are exactly five orbits. The orbits of the stabiliser group were previously described without proof in \cite{GlGuMaGuBook}.
Here, we prove that this holds for any finite field. Moreover, we show that there are exactly four orbits if the field is algebraically closed, and that for any field there are exactly four orbits on singular points.

We focus on the geometric properties of Segre varieties to prove our results and use the links with tensor products and algebras when necessary. In particular, our techniques rely on the notions of the rank of tensors, nonsingular tensors and finite semifields, and we introduce these notions in Section \ref{sec:prelims}. Our research was motivated by the theory of finite semifields, and the links with nonsingular tensors brought to our attention by \cite{Lavrauw2012} following \cite{Liebler1981}.

We refer to \cite{Knuth1965} for more on semifields and projective planes, and \cite{LaPo2011} for a recent survey on finite semifields and Galois geometry. We also refer the interested reader to the recent relevant work \cite{Coolsaet2012}, where nonsingular $2\! \times\! 2\! \times \! 2\! \times\!  2$ hypercubes are studied.

We conclude the introduction with the summary of our main results, which are proved in Section \ref{sec:orbits} (see Theorem \ref{thm:orbits} and Theorem \ref{thm:finitecase}).

{\em If $G_X$ denotes the setwise stabiliser, inside the projective group ${\mathrm{PGL}}(V)$, $V=\F^2\otimes \F^2\otimes \F^2$, of the Segre variety product of three projective lines defined over a field $\F$, then $G_X$ has exactly four orbits on {\it singular} points of $\PG(V)$, and $G_X$ has exactly five orbits on points of $\PG(V)$ if $\F$ is {\it finite}.}

\section{Preliminaries}\label{sec:prelims}


Consider the tensor product $\bigotimes_{i\in I} V_i$ ($I=\{1, \ldots, r\}$, $r<\infty$), where $V_1, \ldots, V_r$ are finite dimensional vector spaces over some field $\F$, with $\dim V_i=n_i<\infty$. 
The elements $u\in \V$ that can be written as 
$$u=v_1\otimes \ldots \otimes v_r$$
for some $v_1\in V_1,\ldots, v_r \in V_r$, are called the {\it fundamental tensors} or {\it pure tensors}. The set of fundamental tensors generates the vector space $\V$. The minimal number of pure tensors $u_1,u_2,\ldots u_k$ needed to express a tensor $v\in \bigotimes_{i\in I} V_i$ as a linear combination of $u_1,u_2,\ldots u_k$, is called the {\it rank of $v$}, and is denote by $\rk(v)$.

If $u=v_1\otimes \ldots \otimes v_r$ is
a fundamental tensor in $\V$, and $w_i^\vee \in V_i^\vee$, where $V_i^\vee$ denotes the dual space of $V_i$, then we define $w_i^\vee(u)$ as the tensor
\begin{eqnarray}\label{eqn:contraction}
w_i^\vee(u):=w_i^\vee(v_i)(v_1\otimes \ldots \otimes v_{i-1} \otimes v_{i+1} \otimes \ldots \otimes v_r)\in \bigotimes_{{j\in I\setminus\{i\}}}V_j.
\end{eqnarray}
Since the fundamental tensors span $\V$, this definition naturally extends to a definition of $w_i^\vee(v)$ for any 
$v\in \V$. We call
$$
w_i^\vee(v) \in \bigotimes_{{j\in I\setminus\{i\}}} V_j,
$$
the {\it contraction of $v\in \V$ by $w_i^\vee\in V_i^\vee$}. Also, we say that a vector $u\in V_i$ is {\it nonsingular} if $u\neq 0$, and by induction that a tensor $v\in \V$, $|I|\geq 2$, is {\it nonsingular} if for every $i\in I$ and $w_i^\vee\in V_i^\vee$, $w_i^\vee \neq 0$, the contraction $w_i^\vee(v)$ is nonsingular. A tensor $v\in \V$ is called {\it singular} if it is not nonsingular.

\begin{lemma}\label{lem:alg_closed1}
If $\F$ is algebraically closed, then each tensor of $\V$, with $|I|=3$ and $n_1=n_2=n_3$, is singular.
\end{lemma}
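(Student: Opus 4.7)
The strategy is to show that any $v\in V_1\otimes V_2\otimes V_3$ (with $\dim V_i=n$) admits a nonzero $w\in V_1^\vee$ for which the contraction $w(v)\in V_2\otimes V_3$ fails to be nonsingular; by the inductive definition of nonsingular tensors, this forces $v$ to be singular. I would treat the case $n\ge 2$ as the substantive one, since $n=1$ reduces to scalars and is handled separately.

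First I would consider the linear map $\phi\colon V_1^\vee\to V_2\otimes V_3$ defined by $\phi(w)=w(v)$. If $\ker\phi\ne 0$, any nonzero $w\in\ker\phi$ already produces a singular contraction (namely the zero tensor, whose own contractions all vanish), so $v$ is singular and we are done. Hence I may assume $\phi$ is injective.

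Second, I would identify $V_2\otimes V_3\cong \Hom(V_2^\vee,V_3)$, so that each contraction $\phi(w)=w(v)$ becomes an $n\times n$ matrix $M(w)$ whose entries are linear forms in the $n$ coordinates of $w$. The function $w\mapsto \det M(w)$ is then a homogeneous polynomial of degree $n$ in $n$ variables. Because $\F$ is algebraically closed and $n\ge 2$, its vanishing locus is a nonempty hypersurface in $\PG(V_1^\vee)=\PG(n-1,\F)$, so there exists a nonzero $w\in V_1^\vee$ with $\det M(w)=0$. I expect this existence of a nontrivial projective zero to be the main conceptual step; it is the unique place where algebraic closure is used.

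Third, for such a $w$ the matrix $M(w)$ has a nontrivial kernel, giving some nonzero $w'\in V_2^\vee$ with $w'(w(v))=0$. Thus $w(v)\in V_2\otimes V_3$ admits a nonzero contraction that vanishes, so $w(v)$ is singular in the $|I|=2$ sense. Since $w\ne 0$, this in turn means $v$ is singular in the $|I|=3$ sense, completing the proof. The remaining work is purely bookkeeping with the inductive definition of singularity.
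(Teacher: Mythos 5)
Your proof is correct and takes essentially the same approach as the paper: identify the contractions along one factor with the pencil $x_1A_1+\dots+x_nA_n$ of $n\times n$ slices, note that the determinant is a homogeneous polynomial of degree $n$ in the $n$ coordinates of the contracting functional, and use algebraic closure to produce a nontrivial zero, which yields a singular contraction. You are in fact slightly more explicit than the paper about the reduction of tensor singularity in $V_2\otimes V_3$ to matrix singularity and about why homogeneity in $n\ge 2$ variables guarantees a \emph{nonzero} solution.
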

\begin{proof}
A general contraction $w_i^\vee \in V_i^\vee$ of a tensor $v \in \V$ can be identified with
$x_1A_1+x_2A_2+\ldots +x_nA_n$ (where $n=n_1=n_2=n_3$), with respect to some bases of
$V_1$, $V_2$ and $V_3$. Here $(x_1,x_2,\ldots,x_n)$ are the coordinates of $w_i^\vee$ with respect
to the dual basis of $V_i$, and $A_1,A_2,\ldots, A_n$ are $n\times n$-matrices, constituting the
hypercube of $v$ with respect to these bases (these are sometimes called {\it slices}, see 
e.g. \cite[Page 446]{GeKaZe1994}). It follows that $v$ is singular
if and only if there exist $x_1,x_2,\ldots,x_n \in \F$ for which the matrix $x_1A_1+x_2A_2+\ldots +x_nA_n$ is singular. This condition is equivalent to the existence of a solution of a polynomial equation in  $(x_1,x_2,\ldots,x_n)$.
\end{proof}


Put $V=\bigotimes_{i\in I} V_i$, ($I=\{1, \ldots, r\}$, $r<\infty$), where $V_1, \ldots, V_r$ are finite dimensional vectorspaces over some field $\F$, with $\dim V_i=n_i<\infty$, and consider the {\it Segre embedding}
$$
\sigma~:~\PG(V_1)\times \PG(V_2)\times \ldots \PG(V_r) ~\rightarrow ~\PG(V)
$$
$$
(\langle v_1\rangle ,\langle v_2\rangle ,\ldots, \langle v_r\rangle )\mapsto 
\langle v_1\otimes v_2 \otimes \ldots \otimes v_r\rangle.
$$
Let $S_{n_1,n_2,\ldots,n_r}(\F)$ denote the Segre variety in $\PG(V)$. The set of $\F$-rational points of the variety $S_{n_1,n_2,\ldots,n_r}(\F)$ in $\PG(V)$, equals the image of the Segre embedding $\sigma$.

We call a point $x=\langle v \rangle$ in $\PG(V)$  {\it singular} (with respect to the Segre variety $S_{n_1,n_2,\ldots,n_r}(\F)$), if $v\in V$ is singular, and {\it nonsingular} (with respect to the Segre variety $S_{n_1,n_2,\ldots,n_r}(\F)$) otherwise.

\begin{lemma}\label{lem:alg_closed2}
If $\F$ is algebraically closed, then each point of $\PG(n^3-1,\F)$ is singular with respect to the Segre variety
$\S_{n,n,n}(\F)$.
\end{lemma}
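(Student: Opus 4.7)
The plan is to derive Lemma \ref{lem:alg_closed2} as an essentially immediate consequence of Lemma \ref{lem:alg_closed1} together with the definition of a singular point of $\PG(V)$. Recall that, by construction, $V=V_1\otimes V_2\otimes V_3$ with $\dim V_i=n$, so $\dim V=n^3$ and hence $\PG(V)=\PG(n^3-1,\F)$. By definition, a point $x=\langle v\rangle\in\PG(V)$ is singular with respect to $\S_{n,n,n}(\F)$ precisely when the tensor $v\in V$ is singular. Thus the statement we wish to prove translates into the purely tensorial claim that every nonzero $v\in V$ is singular as a tensor, which is exactly what Lemma \ref{lem:alg_closed1} provides when $\F$ is algebraically closed (the case $v=0$ does not arise in $\PG(V)$ but is trivially singular anyway).

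So the proof reduces to one line: apply the definition of singular point to a chosen representative $v$ of an arbitrary point $\langle v\rangle\in\PG(n^3-1,\F)$, and invoke Lemma \ref{lem:alg_closed1} to conclude that $v$ is singular, hence $\langle v\rangle$ is a singular point. There is no real obstacle here, since the only nontrivial content, the existence of a nonzero contraction vector $w_i^\vee\in V_i^\vee$ with $w_i^\vee(v)$ singular, is exactly the output of Lemma \ref{lem:alg_closed1}, obtained there from the fact that over an algebraically closed field the polynomial equation $\det(x_1A_1+\cdots+x_nA_n)=0$ in the coordinates of $w_i^\vee$ always has a nonzero solution. The only sanity checks needed are that the hypotheses $|I|=3$ and $n_1=n_2=n_3$ of Lemma \ref{lem:alg_closed1} match the setup of $\S_{n,n,n}(\F)$, and that singularity of the point is well-defined on projective classes (which is clear, as scaling $v$ by a nonzero scalar scales each contraction $w_i^\vee(v)$ by the same scalar and so preserves singularity).
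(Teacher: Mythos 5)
Your proposal is correct and follows exactly the same route as the paper, whose entire proof reads ``Immediate from Lemma \ref{lem:alg_closed1}''; you simply spell out the translation between singular points and singular tensors and the well-definedness on projective classes. Nothing further is needed.
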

\begin{proof}
Immediate from \ref{lem:alg_closed1}.
\end{proof}

We define the {\it rank of a point} $x=\langle v\rangle \in \PG(V)$ as the minimal number of points of $S_{n_1,n_2,\ldots,n_r}(\F)$ needed to span a subspace of $\PG(V)$ containing $x$. We denote this rank by $\rk(x)$. This corresponds to the rank of the tensor $v \in V$, i.e. $\rk(\langle v\rangle)=\rk (v)$. It follows that the points of
the Segre variety have rank 1, and that the rank of a point is invariant under the group of the Segre variety.

\section{Counting orbits}\label{sec:orbits}

In this section, we study the Segre variety, which is the image of the product of three projective lines, i.e. $r=3$ and $n_1=n_2=n_3=2$.
Let $X$ denote the set of $\F$-rational points of a Segre variety $S_{2,2,2}(\F)$, and let $G_X$ denote the setwise stabiliser of $X$ inside the projective group ${\mathrm{PGL}}(7,\F)$. We will prove that $G_X$ has four orbits on {\it singular} points of $\PG(V)$, and that $G_X$ has five orbits on points of $\PG(V)$ if $\F$ is {\it finite}.

It is well known that the group $G_X$ is the group of collineations induced by the wreath product $\GL(\F^2)\wr S_3=K\rtimes S_3$, with base $K=\GL(\F^2)\times \GL(\F^2)\times \GL(\F^2)$.

It is well known that each point $y=\langle y_1\otimes y_2\otimes y_2\rangle \in X$ lies on 
three lines that are contained in $X$: the line $L_1(y)$, which is the image of  $\PG(V_1) \times \langle y_2 \rangle \times \langle y_3\rangle $ under $\sigma$; the line $L_2(y)$, which is
the image of  $\langle y_1\rangle \times \PG(V_2) \times \langle y_3\rangle$ under $\sigma$; and the line $L_3(y)$ which is the image of  $\langle y_1\rangle \times \langle y_2\rangle \times \PG(V_3)$ under $\sigma$. Also, the image
of $\PG(V_1) \times \PG(V_2) \times \langle y_3\rangle $ under $\sigma$ is a hyperbolic quadric $Q_3(y)$, contained in $X$, and containing the point $y$ and the two lines $L_1(y)$ and $L_2(y)$. This also holds for the two other pairs of lines, i.e.
each pair of these lines $(L_i(y),L_j(y))$, $i\neq j$, induces a hyperbolic quadric $Q_k(y)\subset X$, $\{i,j,k\}= \{1,2,3\}$.
Each $Q_k(y)$ spans a solid, which we denote by ${\mathcal{L}}_k(y)$, and $Q_k(y)$ is the intersection of 
${\mathcal{L}}_k(y)$ with $X$.
The union of these solids will be called the {\it shamrock of the point $y$}, and is denoted by $\sh(y)$. The solids $\L_i(y)$ are called the {\it leafs} of the shamrock $\sh(y)$. Clearly any point in the shamrock of a point $y$ has rank at most two, as each point of a leaf $\L_i(y)$ lies on a secant to the hyperbolic quadric $Q_i(y)$.

We introduce the following definition for our convenience.

\begin{definition}
We define the {\it type of a line} $L=\langle y,z\rangle$, spanned by points $y,z\in X$ (say $y=y_1\otimes y_2\otimes y_3$ and $z=z_1\otimes z_2\otimes z_3$) as $(a_1,a_2,a_3)$, where $a_i=\dim\langle y_i,z_i\rangle$. Similarly we will say that a plane $\pi=\langle y,z,w\rangle$, spanned by points $y,z,w\in X$, with
$y=y_1\otimes y_2\otimes y_3$, $z=z_1\otimes z_2\otimes z_3$, and $w=w_1\otimes w_2\otimes w_3$, has type $(a_1,a_2,a_3)$, where $a_i=|\{\langle y_i\rangle,\langle z_i\rangle ,\langle w_i \rangle\}|$.
\end{definition}
The following is obvious.
\begin{lemma}\label{lem:type}
(i) The type of a line (resp. plane) is invariant under the action of the group of collineations induced by the base group
$K=\GL(\F^2)\times \GL(\F^2)\times \GL(\F^2)$.\\
(ii) Using the action of the symmetric group $S_3$ as a subgroup of $G_X$, each $G_X$-orbit of lines (resp. planes) spanned by points of $X$ is represented by a line (resp. plane) of type $(a_1,a_2,a_3)$ with $1\leq a_1\leq a_2\leq a_3\leq2$ (resp. $1\leq a_1\leq a_2\leq a_3\leq3$). 
\end{lemma}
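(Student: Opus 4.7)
The plan is to verify both parts by direct computation on the action of the factors of the wreath product $\GL(\F^2)\wr S_3 = K\rtimes S_3$.

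For part (i), I would take a line $L=\langle y,z\rangle$ with $y=y_1\otimes y_2\otimes y_3$ and $z=z_1\otimes z_2\otimes z_3$, and an element $g=(g_1,g_2,g_3)\in K$ acting as $g_1\otimes g_2\otimes g_3$ on $V$. Then $g\cdot y$ and $g\cdot z$ are again pure tensors, namely $(g_1y_1)\otimes(g_2y_2)\otimes(g_3y_3)$ and $(g_1z_1)\otimes(g_2z_2)\otimes(g_3z_3)$, so the image line $g\cdot L$ is spanned by points of $X$ expressed factor-wise. Since each $g_i\in\GL(V_i)$ is invertible, it preserves dimensions of subspaces of $V_i$, so $\dim\langle g_iy_i,g_iz_i\rangle=\dim\langle y_i,z_i\rangle=a_i$. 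Thus the type is preserved. For planes the same argument works with the observation that $g_i$ induces a bijection on $\PG(V_i)$, hence $|\{g_i\langle y_i\rangle,g_i\langle z_i\rangle,g_i\langle w_i\rangle\}|=|\{\langle y_i\rangle,\langle z_i\rangle,\langle w_i\rangle\}|=a_i$.

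For part (ii), I would identify all three $V_i$ with a fixed $2$-dimensional space, so that $S_3$ acts on $V$ by permuting the tensor factors; this realises $S_3$ as a subgroup of $G_X$ by the structure of the wreath product recalled above. If $\tau\in S_3$, then $\tau$ sends a pure tensor $y_1\otimes y_2\otimes y_3$ to $y_{\tau^{-1}(1)}\otimes y_{\tau^{-1}(2)}\otimes y_{\tau^{-1}(3)}$, and similarly for $z$ (and $w$ in the plane case). By the definition of type, the type of $\tau\cdot L$ is $(a_{\tau^{-1}(1)},a_{\tau^{-1}(2)},a_{\tau^{-1}(3)})$, i.e.\ the entries of the original type permuted by $\tau$. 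Consequently, within each $G_X$-orbit one can choose a representative whose type is sorted in non-decreasing order.

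The stated bounds follow immediately from the definition: for a line, $a_i=\dim\langle y_i,z_i\rangle$ lies in $\{1,2\}$ because $V_i$ is $2$-dimensional and $y_i\neq 0$; for a plane, $a_i=|\{\langle y_i\rangle,\langle z_i\rangle,\langle w_i\rangle\}|\in\{1,2,3\}$. (Implicitly $a_3\geq 2$ in both cases, since otherwise the spanning points would coincide projectively and would not span a line or plane, respectively.) There is no serious obstacle here; the only thing to keep track of is that the $S_3$-action on $V$ is only well-defined after a choice of identifications among the $V_i$, but such an identification exists in our setting since $n_1=n_2=n_3=2$, and the wreath product structure guarantees that the resulting permutations lie in $G_X$.
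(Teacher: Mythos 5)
Your proof is correct and fills in exactly the routine verification that the paper omits: the paper simply declares this lemma ``obvious,'' and your factor-wise computation for $K$ together with the observation that $S_3$ permutes the entries of the type is the intended argument. Nothing further is needed.
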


\begin{lemma}\label{lem:singular}
A point of $\PG(7,\F)$ is singular if and only if it is contained in a plane of type $(a_1,a_2,a_3)$, with at least one $a_i\leq 2$.
\end{lemma}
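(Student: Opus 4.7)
The plan is to prove the two implications separately, working in an adapted basis of $V_3$ (the distinguished factor, which by the $S_3$-action of Lemma~\ref{lem:type}(ii) can be chosen freely) and using the slice-form of the contraction map exploited in the proof of Lemma~\ref{lem:alg_closed1}.

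$(\Leftarrow)$ Suppose $\langle v\rangle$ lies on a plane $\pi=\langle y,z,w\rangle$ spanned by $y,z,w\in X$ of type $(a_1,a_2,a_3)$ with, after permuting factors, $a_3\leq 2$. Choose a basis $\{e_1,e_2\}$ of $V_3$ containing representatives of the (at most two) $1$-spaces $\langle y_3\rangle,\langle z_3\rangle,\langle w_3\rangle$, and rescale so that $y_3,z_3,w_3\in\{e_1,e_2\}$. Expanding $v=\alpha y+\beta z+\gamma w$ and grouping the summands by their third factor gives $v=M\otimes e_1+N\otimes e_2$, in which at least one of $M,N$ is either zero (case $a_3=1$) or a single pure tensor of the form $\gamma(w_1\otimes w_2)$ (case $a_3=2$). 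The contraction of $v$ by the dual basis vector isolating that summand is therefore either $0$ or a rank-$1$ element of $V_1\otimes V_2$, in particular singular, so $v$ is singular.

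$(\Rightarrow)$ Suppose $v$ is singular; choose $i$ and nonzero $w_i^\vee\in V_i^\vee$ with $w_i^\vee(v)\in V_j\otimes V_k$ singular, and assume $i=3$. Extend $w_3^\vee=e_1^\vee$ to a dual basis of $V_3$, and write $v=A_1\otimes e_1+A_2\otimes e_2$, so that $A_1=w_3^\vee(v)$ has rank $\leq 1$ while $A_2\in V_1\otimes V_2$ has rank $\leq 2$. Decomposing $A_1=b_1\otimes c_1$ (or $0$) and $A_2=b_2\otimes c_2+b_3\otimes c_3$ (with possibly fewer terms) presents $v$ as a sum of at most three pure tensors, each with third factor in $\{e_1,e_2\}$. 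If these three pure tensors are linearly independent they span a plane through $\langle v\rangle$ of type $(a_1,a_2,a_3)$ with $a_3\leq 2$, and the claim follows. Otherwise $v$ has rank $\leq 2$ and lies on a line (or is itself a point) spanned by points of $X$ with third factor in $\{e_1,e_2\}$; one then adjoins a further point $b\otimes c\otimes e_1\in X$ lying outside this span to obtain a plane of the required type, which is possible over any field since the Segre quadric $\S_{2,2}(\F)$ spans $\PG(V_1\otimes V_2)\cong\PG(3,\F)$ and so is not contained in any line.

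The main obstacle is not the conceptual core of the argument---which, via the slice description of contractions, reduces to the fact that every element of $\F^2\otimes\F^2$ has rank at most $2$---but rather the bookkeeping in these low-rank sub-cases of the forward direction, where one must check uniformly across all fields (including $\F_2$) that a genuine plane, rather than merely a line, of the desired type can always be constructed.
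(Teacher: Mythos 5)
Your proof is correct, but it follows a genuinely different route from the paper's. The paper proves only the forward implication, and does so by invoking an external structural result (\cite[Theorem 4.14]{Lavrauw2012}): a singular point $x$ lies in $\langle x_1, Q_k(w)\rangle$ for some $x_1\in X$ and some sub-Segre quadric $Q_k(w)$, and projecting $x$ from $x_1$ onto a $2$-secant $\langle y',z'\rangle$ of $Q_k(w)$ produces the plane $\langle x_1,y',z'\rangle$ of the required type; the converse is left implicit. You instead work directly from the definition of singularity via the slice decomposition $v=A_1\otimes e_1+A_2\otimes e_2$: singularity of $v$ is equivalent (up to the $S_3$-action) to some contraction $A_1$ being a rank-$\leq 1$ element of $V_1\otimes V_2$, and since $A_2$ automatically has rank $\leq 2$, this exhibits $v$ as a sum of at most three pure tensors with only two distinct third factors, hence in a plane with $a_3\leq 2$; the converse simply reverses this by contracting against the dual vector isolating the under-populated slice. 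What your approach buys is self-containment (the paper's proof rests on a cited theorem whose extension to arbitrary fields is merely asserted) and an explicit proof of the ``if'' direction, which the paper's argument does not address at all even though the lemma is stated as an equivalence. What it costs is the low-rank bookkeeping you flag yourself: when the three pure tensors are dependent you must complete a line (or a single point of $X$, which needs \emph{two} additional points, not one) to a plane of the stated type; your completion argument --- that the Segre variety $\mathcal{S}_{2,2}(\F)$ spans $\PG(V_1\otimes V_2)$ and so cannot be contained in a line or a point --- is valid over every field including $\F_2$, so these cases close, but they should be written out in full in a final version.
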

\begin{proof}
Let $x$ be a singular point in $\PG(7,\F)$. 
By \cite[Theorem 4.14]{Lavrauw2012}, the proof of which extends trivially to arbitrary fields, there exists
a point $x_1\in X$ and a Segre variety $Q_k(w)$, $w\in X$, properly contained in $S_{2,2,2}(F)$, such that
$x\subset \langle x_1,Q_k(w)\rangle$. 
Choose $x'\subseteq \langle x,x_1\rangle \cap \langle Q_k(w)\rangle$, and choose a 2-secant 
$L=\langle y',z'\rangle$ through $x'$, where $y',z'\in X$. It follows that the point $x$ is contained in the 
plane $\langle x_1,y',z'\rangle$, which is of type $(a_1,a_2,a_3)$, with at least one $a_k\leq 2$.
\end{proof}

We are now ready to prove our main theorem.

\begin{theorem}\label{thm:orbits}
The group $G_X$ has exactly four orbits on the singular points of $\PG(7,\F)$.\\
\end{theorem}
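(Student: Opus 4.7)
The plan is to identify four singular-orbit representatives and show that (i) they are pairwise inequivalent under $G_X$, and (ii) every singular point is $G_X$-equivalent to one of them. I would take
\begin{align*}
T_1 &= e_1\otimes e_1\otimes e_1,\\
T_2 &= e_1\otimes e_1\otimes e_1 + e_1\otimes e_2\otimes e_2,\\
T_3 &= e_1\otimes e_1\otimes e_1 + e_2\otimes e_2\otimes e_2,\\
T_4 &= e_1\otimes e_1\otimes e_2 + e_1\otimes e_2\otimes e_1 + e_2\otimes e_1\otimes e_1,
\end{align*}
of tensor ranks $1,2,2,3$ respectively; singularity of each is verified by exhibiting a contraction landing on a singular lower-rank tensor (for $T_4$, contracting the first factor by $e_2^\vee$ returns $e_1\otimes e_1$, itself singular in $V_2\otimes V_3$).

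For (i), the rank function is $G_X$-invariant and already separates the rank strata $\{T_1\}$, $\{T_2,T_3\}$, $\{T_4\}$. To separate $T_2$ from $T_3$ I would use the $G_X$-invariant predicate ``lies in some leaf $\L_i(y)$'': $T_2\in \L_1(\langle e_1\rangle)$ directly, while solving $T_3 = v\otimes M$ with $v=\alpha e_1+\beta e_2$ forces the incompatible equations $\alpha M=e_1\otimes e_1$ and $\beta M=e_2\otimes e_2$; the analogous arguments rule out $T_3$ from any $\L_2(\cdot)$ or $\L_3(\cdot)$.

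For (ii), let $x=\langle T\rangle$ be any singular point. By Lemma~\ref{lem:singular} and Lemma~\ref{lem:type}(ii), $x$ lies in a plane $\pi$ spanned by three points of $X$ whose (normalized) type $(a_1,a_2,a_3)$ satisfies $1\leq a_1\leq a_2\leq a_3\leq 3$ and $a_1\leq 2$. If $a_1=1$, then $\pi\subset \L_1(y)$ for some $y\in X$, and the stabilizer of $\L_1(y)$ in $G_X$ induces on $\L_1(y)\cong \PG(V_2\otimes V_3)$ the full projective stabilizer of the hyperbolic quadric $Q_1(y)=\L_1(y)\cap X$ (acting essentially as $\mathrm{PGL}_2\times \mathrm{PGL}_2$ by left-right equivalence on $2\times 2$ matrices), which has exactly two orbits on points: those of $Q_1(y)$ (in the orbit of $T_1$) and off-quadric points (in the orbit of $T_2$). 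If $a_1=2$, then $\pi$ lies in no single leaf; I would then analyse each subcase $(a_2,a_3)\in\{(2,2),(2,3),(3,3)\}$ by normalizing the three spanning $X$-points via the base group $\GL(\F^2)^3$, computing the singular locus of $\pi$ from the matrix-pencil criterion in the proof of Lemma~\ref{lem:alg_closed1}, and reducing every singular point so obtained to $T_3$ (if of rank $2$) or $T_4$ (if of rank $3$).

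The principal obstacle will be the $a_1=2$ case: planes of these mixed types contain both rank-$2$ and rank-$3$ singular points, so once the singular locus inside $\pi$ is cut out as a union of curves or lines (from the degeneracy of the associated pencil), a further reduction inside the plane-stabilizer of $G_X$ must be performed to collapse each piece cleanly to $T_3$ or $T_4$. A subsidiary check is required to ensure that the orbits exposed in the $a_1=1$ case do not reappear as separate orbits in any $a_1=2$ subcase, so that the final count of singular $G_X$-orbits is exactly four.
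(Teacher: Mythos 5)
Your four representatives $T_1,\dots,T_4$ are exactly representatives of the paper's orbits $O_1,\dots,O_4$ (rank one; rank two inside a shamrock; rank two outside every shamrock; singular rank three), and part (i) of your plan is sound: rank together with the $G_X$-invariant predicate ``lies in a leaf'' does separate them, the computation showing $T_3\neq v\otimes M$ is correct, and the singularity checks are fine. The $a_1=1$ case of part (ii) is also correct, and amounts to the same two-orbit structure on a leaf that the paper exploits for $O_1$ and $O_2$.

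The genuine gap is the $a_1=2$ case of part (ii), which you only announce (``I would then analyse each subcase\dots and reduce every singular point so obtained to $T_3$ or $T_4$'') and yourself flag as the principal obstacle; this is precisely where the paper's proof does its real work, and the reduction cannot be taken for granted. Three things are missing. First, for planes of type $(2,2,3)$ one must show they contain no rank-three points at all; the paper does this by exhibiting two skew lines $\ell_y\subset Q_1(z)$ and $\ell_w\subset Q_2(z)$ whose span contains $\pi$ and consists entirely of points of rank at most two. Second, and hardest, for planes of type $(2,3,3)$ one must show that every rank-three point of such a plane also lies in a plane of type $(2,2,2)$; the paper's argument is a delicate synthetic one involving the conic $\mathcal{C}=Q_1(y)\cap\langle y,z,w'\rangle$, its tangent line at $w'$, and the tangent plane $\langle L_2(w'),L_3(w')\rangle$, and it is not clear that a ``matrix-pencil singular locus'' computation would deliver this uniformly over an arbitrary field, since the determinantal locus of a pencil behaves very differently depending on $\F$ --- which is exactly why the paper argues geometrically. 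Third, even once every singular point is located in a leaf, on a $(2,2,2)$ line, or in a $(2,2,2)$ plane, you still owe the transitivity statements for $O_3$ and $O_4$: the paper proves transitivity on rank-three points of $(2,2,2)$ planes by an explicit element $g_1\otimes g_2\otimes g_3$ normalising the coefficients, and an analogous computation is needed for rank-two points on $(2,2,2)$ lines. By contrast, your closing ``subsidiary check'' is automatic once (i) and (ii) are actually established, so the effort in the proposal is misallocated: the load-bearing steps are the $(2,2,3)$ and $(2,3,3)$ reductions and the explicit transitivity computations, none of which are carried out.
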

\begin{proof}
The group $G_X$ acts transitively on the points of $X$, comprising one orbit of points of rank one: $O_1=X$. 

Let $X_2$ be the set of rank two points of $X$. A point $x\in X_2$ lies on at least one $2$-secant $L$ to $X$, say $L=\langle y,z\rangle$, with $y,z\in X$, and by Lemma \ref{lem:type} we may assume that $L$ has type $(1,1,2)$, $(1,2,2)$, or $(2,2,2)$. In the first case, $L$ is contained in $X$, contradicting $x\in X_2$; in the second case the line
$L$ is contained in the leaf $\L_1(y)$ ($z\in Q_1(y)$), and in the last case the line $L$ intersects the shamrock $\sh(y)$ in the point $y$ ($z\notin \sh(y)$).
Let $O_2$ be the set of points of $X_2$ that belong to the shamrock of a point of $X$, and $O_3:=X_2\setminus O_2$.
It follows that $x_2^{G_X} \cap x_3^{G_X}=\emptyset$ for $x_i\in O_i$.

Consider two points $x, x' \in O_2$, say
$x$ is contained in the line spanned by $y,z \in X$ and $x'$ is contained in the line spanned by $y',z'\in X$. Then
$x=y+az$ and $x'=y'+a'z'$, for some $a\neq 0\neq a'$. Since we are interested in orbits under the group $G_X$, by Lemma \ref{lem:type} and part (i), we may assume that $y=y'$, and $z,z' \in Q_1(y)$, and so the lines $\langle y,z\rangle$ and $\langle y,z'\rangle$ are of type $(1,2,2)$. But then one easily verifies that the collineation induced by $g_1\otimes g_2 \otimes g_3$, with $g_1=\id$, $g_2=(y_2\mapsto y_2, z_2\mapsto z'_2)$, and $g_3=(y_3\mapsto y_3, z_3\mapsto (a'/a)z'_3)$ belongs to $G_X$, and maps $x$ to $x'$. It follows that the set $O_2$ forms an orbit of $G_X$.

Similarly, one verifies that also $O_3$, i.e. the set of points of rank two on lines of type $(2,2,2)$, forms an orbit of $G_X$.

We conclude that $O_2$ and $O_3$ are the two orbits of $G_X$ of points of rank two.

Let $X_3$ denote the set of points of rank three, and suppose $x\in X_3$ is a singular point. Since $x\in X_3$, it lies on at least one plane $\pi$ intersecting $X$ in three points, say $\pi=\langle y,z,w\rangle$ with $y,z,w\in X$, and by Lemma \ref{lem:type} and Lemma \ref{lem:singular}, we may assume that $\pi$ has type $(a_1,a_2,a_3)$, with $1\leq a_1\leq a_2\leq a_3\leq3$, and $a_1\leq 2$. If $a_1= 1$, then $\pi$ is contained in a leaf of a point of $X$, a contradiction, since each point in a leaf has rank at most 2. So $\pi$ is of type $(a_1,a_2,a_3)$, with $2= a_1\leq a_2\leq a_3\leq3$.

Suppose $\pi$ has type $(2,2,3)$. Note that if one of the lines determined by two points of $y,z,w$ would have type $(1,1,2)$, then this line would be contained in $X$, and each point of $\pi$ would have rank at most two, a contradiction. So, using Lemma \ref{lem:type}, without loss of generality, we may assume that $y_1=z_1$ and $w_2=z_2$.
This implies that $y\in \L_1(z)$ and $w\in \L_2(z)$. Let $\ell_y$ denote the line 
contained in $Q_1(z)$ through $y$ intersecting $L_3(z)$, $\ell_w$ denote the line
contained in $Q_2(z)$ through $w$ intersecting $L_3(z)$, and note that these lines don't intersect.
It follows that $\pi$ is contained in the three-dimensional space $\langle \ell_y,\ell_w\rangle$. This contradicts $x\in X_3$, since each point of $\langle \ell_y,\ell_w\rangle$ has rank at most two.



Suppose $\pi$ has type $(2,3,3)$. Again, using Lemma \ref{lem:type}, without loss of generality, we may assume that $y_1=z_1$. Denote by $w'$ the unique point of $Q_1(y)$ on the line $L_1(w)$, by $\mathcal C$ the conic section of $Q_1(y)$ with the plane $\langle y,z,w'\rangle$, and by $\ell$ the tangent line at $w'$ to $\mathcal C$.
The planes $\langle x,L_1(w)\rangle$ and $\langle y,z,w'\rangle$ are contained in the three-dimensional space $\langle \pi,w'\rangle$, and intersect in a line $A\ni w'$. If $A\neq \ell$, then $A$ intersects $\mathcal C$ in a second point $w''\neq w'$. But this implies that the plane $\langle x,L_1(w)\rangle$ contains a line $L_1(w)$ and a point $w''\in X\setminus L_1(w)$, and hence only contains points of rank at most two, contradicting $x\in X_3$.
Hence $A=\ell$. Put $\ell_x=\langle w,x\rangle$ and $\alpha=\ell \cap \ell_x$. It follows that $\alpha \neq w'$ and
$\alpha\in \langle L_2(w'),L_3(w')\rangle$, the tangent plane at $w'$ to $Q_1(w')$. Consider any line $r$ 
in $\langle L_2(w'),L_3(w')\rangle$
through $\alpha$ but not through $w$. Then
$r$ intersects $L_i(w')$ in a point $x_i$, $i=1,2$. This implies that $\ell_x$, and hence $x$, is contained in the plane $\langle w,x_1,x_2\rangle$, which is of type $(2,2,2)$.

(Note that we have proved that each line $\ell_x\neq \langle w,w'\rangle$ in the plane $\langle w, \ell \rangle$ is contained in a plane of type $(2,2,2)$.)



So, if $\pi$ has type $(2,3,3)$, then a point of $\pi$ has rank at most two, or is contained in a plane of type $(2,2,2)$.

It follows that a point of $X_3$ is singular if and only if it lies in a plane of type $(2,2,2)$.

Let $O_4$ be the set of points of $X_3$ that belong to a plane of type $(2,2,2)$, and consider two points
$x,x'\in O_4$. Suppose $x\in \pi=\langle y,z,w\rangle$ and $x'\in \pi'=\langle y',z',w'\rangle$, with $y,z,w,y',z',w' \in X$.
By Lemma \ref{lem:type} we may assume that $y_1=z_1$, $z_2=w_2$, $y_3=w_3$ and 
$y'_1=z'_1$, $z'_2=w'_2$, $y'_3=w'_3$.
If $x=ay+bz+w$ and $x'=a'y'+b'z'+w'$ (none of $a,b,a',b'$ is zero, since otherwise $x$ would have rank $\leq 2$), then
the collineation $\tilde{g}\in G_X$ induced by the element $g_1\otimes g_2 \otimes g_3$, with
$g_1=(y_1\mapsto y'_1, w_1\mapsto bb'^{-1}w'_1)$, $g_2=(y_2\mapsto b b'^{-1}y'_2,z_2\mapsto a a'^{-1}z'_2)$, and
$g_3=\id$, maps the point $x$ to
$$
x^{\tilde{g}}=\left \langle (a y_1\otimes y_2\otimes y_3)^{g_1\otimes g_2\otimes g_3}
+(b y_1\otimes z_2\otimes z_3 )^{g_1\otimes g_2\otimes g_3}
+ (w_1\otimes z_2\otimes y_3)^{g_1\otimes g_2\otimes g_3}\right \rangle
$$
$$
=\left \langle \frac{ab}{b'}y'_1\otimes y'_2\otimes y'_3
+\frac{ab}{a'}y'_1\otimes z'_2\otimes z'_3
+\frac{ab}{a'b'} w'_1\otimes z'_2\otimes y'_3
\right \rangle
$$
$$
=\left \langle a'y'_1\otimes y'_2\otimes y'_3
+b'y'_1\otimes z'_2\otimes z'_3
+ w'_1\otimes z'_2\otimes y'_3
\right \rangle=x'.
$$
This shows that $G_X$ acts transitively on $O_4$, and it follows that $O_4$ is the orbit of $G_X$ containing all the
singular points of rank three.
\end{proof}

\begin{theorem}\label{thm:finitecase}
(i) If $\F$ is algebraically closed, then $G_X$ has exactly four orbits on points of $\PG(7,\F)$.\\
(ii) If $\F$ is finite, then $G_X$ has exactly five orbits on the points of $\PG(7,\F)$.
\end{theorem}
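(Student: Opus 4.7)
Part (i) is immediate from Lemma \ref{lem:alg_closed2} applied with $n=2$: every point of $\PG(7,\F)$ is singular when $\F$ is algebraically closed, so Theorem \ref{thm:orbits} yields exactly four orbits.

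For part (ii), I plan to show that the nonsingular points of $\PG(7,\F_q)$ form a single $G_X$-orbit; together with the four singular orbits of Theorem \ref{thm:orbits} this gives exactly five orbits. First, nonsingular tensors exist over any non-algebraically-closed $\F_q$: take $v = e_0\otimes I + e_1\otimes C$ with $C$ the companion matrix of an irreducible quadratic over $\F_q$. For an arbitrary nonsingular $v\in V_1\otimes V_2\otimes V_3$, slice $v$ in the third coordinate as $v = e_0\otimes A_0 + e_1\otimes A_1$ with $A_0,A_1\in \F_q^{2\times 2}$. Nonsingularity of $v$ translates to: every nonzero combination $x_0 A_0 + x_1 A_1$ is invertible.

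Using the subgroup $\GL(V_1)\times\GL(V_2)$ of the base group $K\leq G_X$, which acts on slices by $(A_0,A_1)\mapsto (gA_0 h, gA_1 h)$, normalize $A_0 = I$; the residual action with $h=g^{-1}$ then conjugates $A_1$. Nonsingularity of the pencil $\{xI+yA_1\}$ forces $A_1$ to have irreducible characteristic polynomial over $\F_q$, with an eigenvalue $\alpha \in \F_{q^2}\setminus \F_q$. The factor $\GL(V_3)$ recombines slices $\F_q$-linearly as $(A_0,A_1)\mapsto(cA_0+c'A_1, dA_0+d'A_1)$; after renormalizing $A_0 = I$ once more, $A_1$ is replaced by $(cI+c'A_1)^{-1}(dI+d'A_1)$, a matrix commuting with $A_1$ whose eigenvalue is the M\"obius transform $(d+d'\alpha)(c+c'\alpha)^{-1}$ of $\alpha$ over $\F_q$.

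It then remains to show transitivity of this action on $\F_{q^2}\setminus\F_q$. Even the affine subgroup $\alpha\mapsto c+d\alpha$ (with $c\in\F_q$, $d\in\F_q^\times$) suffices: the equation $c+d\alpha = \alpha$ forces $d=1$ and $c=0$ when $\alpha\notin \F_q$, so the action is free, and the orders match: $q(q-1) = |\F_{q^2}\setminus\F_q|$. Hence the action is transitive, and all nonsingular tensors lie in one $G_X$-orbit. The main obstacle will be the bookkeeping in the dictionary between the $K$-action on $v$ and the slice-pair transformations above, and in particular verifying that the eigenvalue of $A_1$, modulo the M\"obius action, is a complete $G_X$-invariant of a nonsingular tensor; this is essentially the standard correspondence between nonsingular tensors in $\F_q^2\otimes \F_q^2\otimes \F_q^2$ and $2$-dimensional pre-semifields over $\F_q$, all of which are isotopic to $\F_{q^2}$.
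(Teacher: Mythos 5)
Your proof is correct, and part (ii) takes a genuinely different route from the paper. The paper disposes of the nonsingular points in two lines by quoting the correspondence between $G_X$-orbits of nonsingular tensors and isotopism classes of finite semifields (\cite{Liebler1981}, \cite{Lavrauw2012}) together with Dickson's theorem that every semifield of dimension two over its centre is a field; transitivity on nonsingular points is then immediate. You instead carry out the underlying computation directly: normalising a slice pair to $(I,A_1)$, observing that nonsingularity forces the characteristic polynomial of $A_1$ to be irreducible, and showing that the residual action of $K$ on the eigenvalue $\alpha\in\F_{q^2}\setminus\F_q$ is by M\"obius (indeed already affine) maps over $\F_q$, which act transitively by your free-action order count. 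This is in effect a self-contained reproof of Dickson's theorem in tensor language, as you note yourself at the end. What your version buys is an elementary, citation-free argument with an explicit canonical form $(I,C)$ and an explicit witness that nonsingular tensors exist over $\F_q$ --- a point the paper leaves implicit in the correspondence, though it is needed to get \emph{five} rather than \emph{at most five} orbits. What the paper's version buys is brevity and the conceptual link to semifields that motivates the whole article. Two small things you should make explicit when writing this up: (a) your criterion ``every nonzero $x_0A_0+x_1A_1$ is invertible'' captures nonsingularity because invertibility of all contractions in one direction implies the same in the other two (a short kernel argument); and (b) the nonsingular points form a union of $G_X$-orbits disjoint from the four singular ones because $G_X$ preserves nonsingularity, so your single nonsingular orbit really is a fifth orbit. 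Part (i) of your argument coincides with the paper's.
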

\begin{proof}
The first statement is an immediate consequence of Lemma \ref{lem:alg_closed1} and Theorem \ref{thm:orbits}.
The second statement follows from Theorem \ref{thm:orbits}, and the correspondence between
nonsingular tensors and finite semifields (\cite{Liebler1981},\cite{Lavrauw2012}). The orbits of $G_X$ on nonsingular points 
correspond to the isotopism classes of finite semifields (\cite[Theorem 2.1]{Liebler1981} or 
\cite[Theorem 4.3]{Lavrauw2012}). Since each
semifield of dimension two (over its center) is a field (Dickson \cite{Dickson1906}), it follows that
$G_X$ acts transitively on the set of nonsingular points of $\PG(7,\F)$.
\end{proof}

\end{document}